\newcommand{\del}{\partial}
\newtheorem{te}{Theorem}[section]
\newtheorem{pr}{Proposition}[section]
\newtheorem{definition}{Definition}[section]
\newtheorem{remark}{Remark}[section]
\begin{document}

\title[The pluriclosed flow for $T^2$-invariant  Vaisman metrics]{The pluriclosed flow for $T^2$-invariant  Vaisman metrics on the Kodaira-Thurston surface}
\author{Anna Fino}
\address[Anna Fino]{Dipartimento di Matematica ``G. Peano'', Universit\`{a} degli studi di Torino \\
Via Carlo Alberto 10\\
10123 Torino, Italy\\
\& Department of Mathematics and Statistics, Florida International University\\
Miami, FL 33199, United States}
\email{annamaria.fino@unito.it, afino@fiu.edu}

\author{Gueo Grantcharov}
\address[Gueo Grantcharov]{Department of Mathematics and Statistics \\
Florida International University\\
Miami, FL 33199, United States}
\email{grantchg@fiu.edu}

\author{Eddy Perez}
\address[Eddy Perez]{Department of Mathematics and Statistics \\
Florida International University\\
Miami, FL 33199, United States}
\email{epere389@fiu.edu}

\keywords{Vaisman metric, pluriclosed flow}

\subjclass[2010]{53C55; 53C05; 22E25; 53C30; 53C44}

\maketitle

\begin{abstract} In this note we study  $T^2$-invariant  pluriclosed metrics on the Kodaira-Thurston surface. We  obtain a characterization  of  $T^2$-invariant Vaisman  metrics, and notice that the Kodaira-Thurston surface admits  Vaisman metrics with non-constant scalar  curvature. Then we study the behaviour  of the Vaisman condition  in relation to the pluriclosed flow.
As a consequence, we show that  if the initial metric  on the Kodaira-Thurston surface is a $T^2$-invariant  Vaisman metric, then the pluriclosed flow preserves the Vaisman condition, extending to the non-constant scalar curvature case the previous  result  in  \cite{FT}.

\end{abstract}

\section{introduction}

Given a Hermitian manifold $(M,J,  g)$ of complex dimension $n$ the Bismut connection $\nabla^B$  (also known as the Strominger connection)  is the unique connection on $M$ that is Hermitian  (i.e.  $ \nabla^B J =0$, $\nabla^B g =0$)  and has totally skew-symmetric torsion tensor. Its explicit expression appeared  in Strominger's paper \cite{Strominger} and independently in Bismut’s paper \cite{Bismut}, where $\nabla^B$ was used  in to study  local index theorems.

A  Hermitian metric $g$  on a  complex  manifold  is called {\em pluriclosed} if its  fundamental form $\omega$ satisfies $\partial \overline \partial \omega =0$ or equivalently  if the  Bismut  torsion $3$-form is closed.
  This is a weaker restriction than the K\"ahler condition on $g$ and every compact complex surface admits pluriclosed metrics  \cite{gauduchon}.
  The pluriclosed metrics have been studied by many authors. We refer for instance  to  \cite{AI, FinoTom, ZZ, YZZ} and the references therein for more  backgrounds and results.

  In  \cite{streets-tian} Streets and Tian  introduced a parabolic flow of pluriclosed metrics, also  called the {\em  pluriclosed flow},    which is defined by the equation
$$
\frac{\del}{\del\,t}\omega(t)= - {(\rho^B)}^{1,1}\,,\qquad
\omega(0)=\omega_0,
$$
where ${({\rho^B})^{1,1}:= {(\rho^B (\omega(t))}^{1,1}}$ denotes the $(1,1)$-part of the Ricci form of the Bismut connection and $\omega_0$ is  the fundamental form of a fixed   pluriclosed  metric. This flow preserves the pluriclosed  condition and    also the existence of generalized K\"ahler structures  \cite{ST2}.

A classification of compact solitons for the pluriclosed flow on complex surfaces was given in \cite{Streets3}, showing  that the complex surface underlying a soliton must be either  K\"ahler or  a  Hopf surface.  On   class 1 Hopf surfaces there exists a non-trivial pluriclosed steady soliton,  but the question of existence on  class 0 Hopf surfaces  is open.

It was conjectured in \cite{AOUV}  that if a compact Hermitian manifold  $(M, J, g)$, is with K\"ahler-like Bismut connection, i.e. if  $\nabla^B$ satisfies the first Bianchi identity and the  condition
$$
R^B (X,Y,JZ,JW) = R^B(X,Y,Z,W) = R^B(JX,JY,Z,W)
$$
for every tangent vector fields $X, Y, Z$ and $W$, then $g$ has to pluriclosed.
An affirmative answer to the above conjecture,  even without the compactness assumption, has been given in \cite{ZZ}, showing  that  $\nabla^B$ is K\"ahler-like if and only if  $\nabla^B$ has parallel torsion and $g$ is pluriclosed.
As proved in \cite{AOUV},  already in complex dimension $2$  there are examples of compact Hermitian manifolds which are Bismut K\"ahler-like, but not Bismut flat. The simplest such example is  given by the Kodaira-Thurston surface \cite{Kodaira,Thurston}.

For complex surfaces  the Bismut  K\"ahler-like condition is equivalent to the Vaisman condition. Recall that a  {\em Vaisman metric } $g$ on a complex manifold $(M, J)$ is a Hermitian metric  whose fundamental form $\omega$ satisfies $d \omega  = \theta \wedge \omega$  for some  1-form $\theta$ which is  $d$-closed and parallel with respect  to the Levi-Civita connection. When the complex surface has odd first  Betti number,   Belgun \cite{belgun-surfaces} gave a complete classification of all such metrics. In particular,  the complex surface  is either a properly elliptic surface, a Kodaira surface or an elliptic or Class 1 Hopf surface. If  the complex surface  has even first Betti number, then  the Vaisman metric  must be   K\"ahler and hence   there exist  no non-K\"ahler Vaisman metrics on such surfaces. N. Istrati \cite[ Example 7.1]{Istrati}  noticed that every compact complex surface admitting Vaisman metric has vanishing real first Chern class.


 In \cite{FT} it has been  shown that  the Vaisman condition  on complex surfaces is preserved along the pluriclosed flow if  the initial metric has constant scalar curvature. One motivation for the present paper is to  check whether the constant scalar curvature condition is necessary.  A complete description of the global existence and convergence for the Ricci-Yang-Mills flow on torus  bundles over Riemann surfaces have been obtained in \cite{Streets}. So a natural question is to investigate if the Vaisman condition is preserved  for  torus bundles over Riemann surfaces.

 We focus on Kodaira-Thurston surface which can be also viewed  as total space of a  principal  torus bundle over the $2$-torus $T^2$. We first  obtain a description of $T^2$-invariant  Vaisman metrics,  showing that  there are $T^2$-invariant  Vaisman  metrics which have non-constant scalar curvature.  Then,  as  a main result we prove that if  $\omega_0$  is the fundamental form of  a  $T^2$-invariant  Vaisman metric on the Kodaira-Thurston surface then the pluriclosed flow starting with $\omega_0$  preserves the Vaisman condition.

The content of the paper is as follows: In Section 2 we collect the main known results about the Vaisman manifolds which we need later. Then in Section 3 we prove the main result.

\vspace{.5in}

\section{Preliminaries}

Let $(M,J)$ be a complex manifold of complex dimension $n$ and let $g$ be a Hermitian metric on $X$ with associated fundamental form $\omega(\cdot\,,\cdot)=g(\cdot,  J \cdot)$\,. An affine connection is called Hermitian if it preserves the metric $g$ and the complex structure $J$. In particular, Gauduchon in \cite{gauduchon-bumi} proved that there exists an affine line $\left\lbrace\nabla^t\right\rbrace_{t\in\mathbb{R}}$ of canonical Hermitian connections, passing through the \emph{Chern connection} and the \emph{Bismut connection}; these connections are completely determined by their torsion.
Let $\nabla$ be a Hermitian connection and $T(X,Y)=\nabla_X Y-\nabla_Y X -[X,Y]$ be its torsion, we denote with the same symbol
$$
T(X,Y,Z):=g(T(X,Y), Z).
$$
Then the Chern connection $\nabla^{Ch}$ is the unique Hermitian connection whose torsion has trivial $(1,1)$-component and the Bismut connection (also called Strominger connection) $\nabla^B$ is the unique Hermitian connection with totally skew-symmetric torsion. In particular, the torsion of the Bismut connection satisfies
$$
T^B(X,Y,Z)=d^c\omega(X,Y,Z), 
$$
where $d^c=-J^{-1}dJ$.

The Chern and Bismut connections are related to the Levi-Civita connection $\nabla^{LC}$ by
$$
\begin{aligned}
g(\nabla^B_ X Y, Z)&=g(\nabla^{LC}_X Y, Z)+\frac{1}{2}d^c\omega(X,Y,Z)\,,\\
g(\nabla^{Ch}_X Y, Z)&=g(\nabla^{LC}_X Y,Z)+\frac{1}{2}d\omega(JX,Y,Z)\,.
\end{aligned}
$$

A Hermitian metric $g$ on a complex manifold $(M, J)$  is called  \emph{pluriclosed}  or  \emph{strong K\"ahler with torsion} (\emph{SKT} for brevity) if $T^B$ is a closed $3$-form, namely $dT^B=0$, or equivalently $dd^c\omega=0$.\\
Recall that the trace of the torsion of the Chern connection is equal to the Lee form of $g$ (cf. \cite{gauduchon}), that is the $1$-form defined by
$$
\theta=Jd^*\omega,
$$
where $d^*$ is the adjoint of the exterior derivative $d$ with respect to $g$, or equivalently $\theta$ is the unique $1$-form satisfying
$$
d\omega^{n-1}=\theta\wedge\omega^{n-1}\,.
$$
A Hermitian metric $g$ is called \emph{Gauduchon} if
$dd^c\omega^{n-1}=0$, or equivalently
$d^*\theta=0$. In particular, in  complex dimension $2$ Gauduchon and pluriclosed  metrics coincide.

We recall the following
\begin{definition}
A Hermitian metric $g$ on  a complex manifold $(M, J)$ is called \emph{locally conformally K\"ahler} ({\emph{LCK}} for brevity) if
$$
d\omega=\alpha\wedge\omega,
$$
where $\alpha$ is a $d$-closed $1$-form. In particular, $\alpha=\frac{1}{n-1}\theta$ and $\theta$ is $d$-closed.\\
A locally conformally K\"ahler metric $g$  is called \emph{Vaisman} if the Lee form is parallel with respect to the Levi-Civita connection $\nabla^{LC}$, namely
$$
\nabla^{LC}\theta=0\,.
$$
\end{definition}
An immediate consequence is that the Vaisman metrics are  Gauduchon and the norm of the Lee form $|\theta|$ with respect to them is constant.\\
So on complex surfaces Vaisman metrics are pluriclosed and $
T^B=-*\theta.
$

A Vaisman structure on a complex manifold is uniquely determined (up to a positive constant) by its Lee form $\theta$ via the following
$$
\omega=\frac{1}{|\theta|^2}(\theta\wedge J\theta-dJ\theta).
$$
Moreover, by \cite{OV}  the dual Lee vector field $T= \theta^\#$   is holomorphic and Killing.
A Vaisman metric is called normalized if the Lee form (or, equivalently, the Lee vector field) has norm $1$. Moreover, for a Vaisman structure
$$
d(J \theta) = \theta \wedge J \theta - |\theta|^2 \omega
$$
is always of type $(1,1)$.
By \cite{MMO}   a  pluriclosed  locally conformally K\"ahler  metric  on a compact complex manifold with holomorphic Lee vector field is Vaisman.

If  $(M, J, g, \omega, \theta)$  is a  normalized Vaisman manifold, then for every positive real number $b > 0$ and a harmonic 1-form $\alpha$, pointwise orthogonal to $\theta$ and $J \theta$,  the pair $(\tilde \omega,  \tilde \theta)$ defined by
$$
\tilde \theta :=b  \, \theta  + \alpha, \quad   \tilde \omega :=  \tilde \theta   \wedge J \tilde \theta  - dJ \tilde \theta$$  is a normalized Vaisman structure on $(M, J)$ (see for instance Lemma 3.2 in \cite{MMP}).  The Vaisman structure $(\tilde \omega, \tilde \theta)$ is called  a deformation of type I of  $(\omega, \theta)$.

There is another type of deformation of Vaisman structures, which preserves the cohomology class of the Lee form. Let $(g, \omega, \theta)$ be a Vaisman structure on $(M,J)$ with Lee vector field $T$. Let  $f \in {\mathcal C}^{\infty} (M)$  such that  $T(f) = JT(f) = 0.$ If one defines the closed $1$-form $\tilde \theta$ and the $(1,1)$-form $\tilde \omega$  by
$$
\tilde \theta = \theta + df, \quad \tilde \omega = |\theta|_g^2 \omega + \theta \wedge J df + df \wedge J \theta + df \wedge J df - d d^c f,
$$
then when $\tilde \omega$ is positive, the structure $(\tilde \omega, \tilde \theta)$ is a
normalized Vaisman  structure, called deformation of type II of $(\omega, \theta)$. The Lee vector field  $\tilde T$ of  $(\tilde \omega, \tilde \theta)$ is given by $\frac{1}{|\theta|^2_g} T$.

By Proposition 3.7 in \cite{MMP} if  $(M, J,g,  \omega, \theta_0)$  be a compact Vaisman manifold. Then any normalized Vaisman structure $(\tilde \omega, \tilde \theta)$ on $(M, J)$  is obtained by deformations of type I and II starting from the given Vaisman structure  $(\omega, \theta)$.

\begin{remark}  By   formula (2.7) in \cite{AI}  and \cite{gauduchon-bumi} on  a complex surface the Bismut and Chern Ricci forms are related by the following relation
\begin{equation}\label{relRicciforms}
\rho^{Ch}  =\rho^{B} + d (J \theta).
\end{equation}

By \cite{FT}  if  $(M, J, \omega) $  is  a compact  Vaisman  surface  with Lee form $\theta$, then
$
\rho^{Ch}= h \,dJ\theta
$
for some $h\in\mathcal{C}^\infty(M,\mathbb{R})$.
Moreover, the scalar curvature of $\omega$ is constant if and only if $h$ is constant. Clearly when $h$ is constant, $c_1(M)=0$, but in \cite{Istrati} it was noted that every compact Vaisman surface has vanishing real first Chern class.

Note that, given a normalized Vaisman structure $(\omega, \theta)$ on a complex surface,  if we  consider an  orthonormal basis $(\theta,  J \theta, \xi, J \xi)$ then
$$
\omega = \xi \wedge J \xi + \theta \wedge J \theta, \quad \rho^B = (h - 1) \xi \wedge J \xi = (h -1) d J \theta.
$$
Therefore $h -1 =  \rho^B (\xi ^\#,  J \xi^\#)$.

We note also that by \cite{Istrati} the Vaisman manifolds with vanishig (real) first Chern class fall into 3 classes, depending on the sign  of the first Bott-Chern class. In what follows we focus on the Kodaira-Thurston surface which has  $c_1^{BC} = 0$.

\end{remark}

\section{$T^2$-invariant pluriclosed metrics on the Kodaira-Thurston surface}

The \emph{ Kodaira-Thurston surface} is defined as the compact $4$-manifold
\begin{equation*}
M=Nil^3/\Gamma\times S^1,
\end{equation*}
 where
$Nil^3$ is  the $3$-dimensional  real Heisenberg group
\begin{equation*}
Nil^3=\left\{\left[\begin{smallmatrix}1&x&z\\0&1&y\\0&0&1\end{smallmatrix}\right]\mid x,y,z \in \mathbb R\right\},
\end{equation*}
and $\Gamma$ is  the lattice in $Nil^3$ of matrices having integers entries.

Therefore $M$ is parallelizable  and has a global left-invariant co-frame
$$
e^1= d y,\quad e^2= dx,\quad e^3= dw\quad e^4= dz-x dy
$$
satisfying the structure equations
$$
 de^1= de^2= de^3=0,\quad  de^4=e^{12},
$$
with
\begin{equation*}
e^{ij}=e^i\wedge e^j.
\end{equation*}
The   dual  left-invariant  frame  is given by
$$
e_1 = \partial_y + x \partial_z, \,  e_2 =  {\partial_x}, \,  \, e_3 = \partial_w, \,  e_4 = \partial_z.
$$

Every smooth map $u\colon M\to \mathbb R$ can  be regarded as a smooth map $u\colon \mathbb R^4\to \mathbb R$ satisfying the
periodicity condition
\begin{equation*}
u(x+j,y+k,z+jy+m,w+n)=u(x,y,z,w),
\end{equation*}
for all $(x,y,z,t)\in \mathbb R^4$ and $(j,k,m,n)\in\mathbb Z^4$.
We consider on $M$  the complex structure given by
$$
Je^1=e^2,\qquad Je^3=e^4.
$$
A global frame of $(1,0)$-forms is given by
$$
\varphi^1 = e^1 +  i e^2 = dy + i dx, \quad \varphi^2 = e^3 + i e^4 =  dw+ i(dz-x dy).
$$
Therefore
$$
d \varphi^1 =0,  \quad d \varphi^2 = - \frac 12 \varphi^{1\overline 1}.
$$
Moreover,  $ y + i x$ and $w + i z - \frac{1}{2} x^2$ are  local holomorphic coordinates.

Since $Nil^3/\Gamma\times S^1=(Nil^3\times \mathbb R)/(\Gamma\times \mathbb Z)$,  the Kodaira-Thurston surface $M$
is a $2$-step nilmanifold and every left-invariant  Hermitian  structure on $Nil^3 \times \mathbb R$ projects to
a Hermitian structure  on $M$. Moreover, the compact $3$-dimensional manifold $N=Nil^3/\Gamma$ is the total
space of  an $S^1$-bundle over a $2$-dimensional torus $T^2$ with projection $\pi_{xy}\colon N\to T^2_{xy}$ and $M$
inherits a structure of  principal $T^2$-bundle over the $2$-dimensional torus $T^2_{xy}$. Then it makes sense to consider differential forms  invariant by the action of the fiber  $T^2_{tz}$. A $k$-form $\phi$ on $M$ is invariant by the action of the fiber $T^2_{zt}$ if  its
coefficients with respect to the global basis $e^{j_1}\wedge \cdots \wedge e^{j_k}$ do not depend on
the variables $z,t$.

An  arbitrary  $T^2$-invariant $J$-invariant  metric  $g$ on $M$  has associated fundamental form
$$
\omega = \frac{1}{2}  i  r \,  \varphi^{1 \overline 1} + \frac{1}{2}  i s \,  \varphi^{2 \overline 2}  + \frac{1}{2} (u \varphi^{1 \overline 2} -  \overline u \varphi^{2 \overline1}) = r e^{12} + s e^{34} + u_1 (e^{13} + e^{24}) + u_2 (e^{14} - e^{23}),
$$
where $r,s$ are real functions  $r  = r(x,y), s= s(x,y)$ and  $u = u (x,y)$ is a complex values   function such that  $r (x, y) >0$, $s (x, y) >0$, $r (x,y)  \, s(x, y) > |u(x,y)|^2$, i.e.  the  Hermitian  matrix

$$
H =  \frac{1}{2}  \left ( \begin{array}{cc} r & - i u\\[2pt]   i \overline u& s \end{array} \right )
$$

is positive definite. In particular, if $u = u_1 + i u_2$, we have
$$
\begin{array}{l}
g (e_1, e_1) = g(e_2, e_2) = r,  \quad g (e_3, e_3) = g(e_4, e_4) = s, \\[2pt]
g (e_1, e_3) = u_2 = g(e_2, e_4),  \quad g (e_1, e_4) = - u_1 = -g(e_2, e_3).
\end{array}
$$
Note that $\omega$ is left-invariant if and only if $r, s$ and $u$ are constant functions.


Moreover
$$d  r = r_x e^2 + r_y e^ 1 =   -\frac 12  i r_x (\varphi^1- \overline \varphi^1) +\frac 12 r_y  (\varphi^1 +\overline  \varphi^1)   $$
and a similar relation holds for $d s$, $du$ and $d \overline u$.
As a consequence
$$
\partial r =   \left(\frac 12 r_y - \frac 12 i r_x \right ) \varphi^1, \quad   \overline \partial r =  \left(\frac 12 r_y +\frac 12 i r_x \right ) \overline  \varphi^1.
$$
Therefore, one has
$$
d \omega =  \frac{1}{4}  (s_x+i s_y) \varphi^{12 \overline 2}+ \frac{1}{4} (s_x - is_y) \varphi^{\overline 1 \overline 2  2} + \frac{1}{4} (- \overline u _y  + i s +  i  \overline u_x )\varphi^{12 \overline1} + \frac{1}{4} (- u_y- i s -i  u _x ) \varphi^{\overline 1 \overline 2 1}
$$
It follows
$$
\overline \partial \omega =   \frac{1}{4} (s_x - is_y) \varphi^{\overline 1 \overline 2  2} +  \frac{1}{4} (- u_y- i s -i  u _x ) \varphi^{1\overline 1 \overline 2}.
$$
Since $\varphi^{\overline 1 \overline 2  2}$ and  $ \varphi^{1\overline 1 \overline 2}$ are both $\partial$-closed,  we get
 $$ \begin{array}{lcl} \partial  \overline \partial \omega &= &  \frac{1}{4}  \partial (s_x - is_y) \varphi^{\overline 1 \overline 2  2} +  \frac{1}{4}  \partial (- u_y- i s -i  u _x) \varphi^{1\overline 1 \overline 2} \\[2pt]
 & = &   \frac{1}{4}  \partial (s_x - is_y) \varphi^{\overline 1 \overline 2  2}
 \end{array}$$

\begin{pr} Let $\omega = \frac{1}{2}  i  r  (x,y)\,  \varphi^{1 \overline 1} + \frac{1}{2}  i s (x,y) \,  \varphi^{2 \overline 2}  + \frac{1}{2} (u (x,y)\varphi^{1 \overline 2} -  \overline u (x,y)  \varphi^{2 \overline1})$  be the fundamental form of  a  $T^2$-invariant  Hermitian metric  on the Kodaira Thurston surface $M$.

\begin{enumerate}

\item [(a)]  $\omega$   is pluriclosed if and only $s (x,y) = s$ is a constant function.

 \item [(b)] If $\omega$ is pluriclosed, then the Bismut Ricci form has the following expression
\begin{equation} \label{rhobunotzero}
\begin{array}{lcl}
(\rho^B)^{1,1} &=&   \frac{i}{2}  \left (-\frac{1}{2}  \partial^2_x \left(  \log(rs-|u|^2)  \right)
  - \frac{1}{2}  \partial^2_y \left(  \log(rs-|u|^2)  \right)
 - (h_1)_y   - (h_2)_x   -h_3 \right)  \, \varphi^{1 \overline 1} \\[4pt]
&& - (- \frac{1}{4} (h_3)_x + \frac 14 (h_4)_y + \frac i 4 (h_3)_y + \frac i 4 (h_4)_x)  \,  \varphi^{1 \overline 2}\\[4pt]
&&  + (- \frac{1}{4} (h_3)_x + \frac 14 (h_4)_y - \frac i 4 (h_3)_y - \frac i 4 (h_4)_x)  \,   \varphi^{2 \overline 1},
 \end{array}
\end{equation}
where  $u_1$ and $u_2$ are respectively  the real part  and the imaginary  part of $u$ and
$$
\begin{array}{lcl}
h_1 &=& \frac{1} {s}   (u_2  h_3  - u_1 h_4),\\[4pt]
h_2 &=& \frac{1} {s}   (u_1  h_3  + u_2 h_4),\\[4pt]
h_3 &=& \frac{s} {(rs- |u|^2)}    (- s - (u_1)_x - (u_2)_y),\\[4pt]
h_4 &= & \frac{s} {(rs- |u|^2)}  ((u_1)_y - (u_2)_x)\\[2pt]
\end{array}
$$
\end{enumerate}
\end{pr}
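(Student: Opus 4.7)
Part (a) is immediate from the computation just before the statement. Since $s_x - is_y$ depends only on $(x,y)$, a direct calculation gives $\partial(s_x - is_y) = -\tfrac{i}{2}(s_{xx}+s_{yy})\varphi^1$, so $\partial\bar\partial\omega = 0$ if and only if $s$ is harmonic on $\RR^2$. Because $s$ descends to the compact base torus $T^2_{xy}$, harmonicity forces $s$ to be constant.

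For part (b), the plan is to combine the dimension-two identity \eqref{relRicciforms}, which gives $(\rho^B)^{1,1} = \rho^{Ch} - (dJ\theta)^{1,1}$ (using that $\rho^{Ch}$ is automatically of type $(1,1)$), with separate computations of the two pieces on the right. For the Chern Ricci form, the $(2,0)$-form $\sigma := \varphi^1 \wedge \varphi^2$ is nowhere vanishing and $\bar\partial$-closed: $\bar\partial\varphi^1 = 0$ and $\bar\partial\varphi^2 = -\tfrac12\varphi^{1\bar 1}$ yield $\bar\partial\sigma = \tfrac12\varphi^{1} \wedge \varphi^{1\bar 1} = 0$. Hence $\sigma$ is a global holomorphic section of the canonical bundle $K_M$, and since the change of frame to the holomorphic coordinates $z_1 = y+ix$, $z_2 = w + iz - x^2/2$ (in which $\varphi^1 = dz_1$ and $\varphi^2 = dz_2 - ix\,dz_1$) has unit determinant, $\det(g_{i\bar j})$ in the $dz$-frame equals $(rs-|u|^2)/4$. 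Thus $\rho^{Ch} = -i\partial\bar\partial \log(rs-|u|^2)$, which using $\partial F = \tfrac12(F_y - iF_x)\varphi^1$ for $F = \log(rs-|u|^2)$ simplifies to $-\tfrac{i}{4}(\partial_x^2+\partial_y^2)\log(rs-|u|^2)\,\varphi^{1\bar 1}$, reproducing the Laplacian term in \eqref{rhobunotzero}.

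For the remaining contribution $-(dJ\theta)^{1,1}$, with $s$ constant the form $d\omega$ reduces to just the $\varphi^{12\bar 1}$ and $\varphi^{\bar 1\bar 2 1}$ terms already computed, and solving $d\omega = \theta \wedge \omega$ for the real Lee one-form $\theta = \sum \theta_i e^i$ yields explicit rational expressions in $r, s, u_1, u_2$ for the $\theta_i$. The functions $h_3, h_4$ are then identified as the coefficients of $e^1, e^2$ in $J\theta$, and the stated relations $h_1 = (u_2 h_3 - u_1 h_4)/s$, $h_2 = (u_1 h_3 + u_2 h_4)/s$ simply re-express the coefficients of $e^3, e^4$ in $J\theta$ via $h_3, h_4$ and the metric components. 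Applying $d$ to $J\theta$ using the structure equations ($de^4 = e^{12}$ and $de^1 = de^2 = de^3 = 0$), converting back to the $\varphi$-frame, and extracting the $(1,1)$-part then contributes the $-(h_1)_y - (h_2)_x - h_3$ term to the $\varphi^{1\bar 1}$ coefficient and the mixed expressions involving $(h_3)_x, (h_4)_y, (h_3)_y, (h_4)_x$ to the $\varphi^{1\bar 2}, \varphi^{2\bar 1}$ coefficients, matching \eqref{rhobunotzero}.

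The main obstacle is the algebraic bookkeeping in this last step: both solving for $\theta$ and differentiating $J\theta$ produce long expressions in $r, s, u_1, u_2$ and their derivatives with denominators $rs - |u|^2$, so the challenge lies in organizing the output into the compact form provided by the auxiliary functions $h_1, \ldots, h_4$. Once the $h_i$ are correctly identified, the verification reduces to matching coefficients against \eqref{rhobunotzero}.
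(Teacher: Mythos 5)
Your proposal follows the same route as the paper: part (a) via the computation of $\partial\overline\partial\omega$ preceding the statement (with $s$ harmonic on the base torus, hence constant), and part (b) by combining \eqref{relRicciforms} with $\rho^{Ch}=-i\partial\overline\partial\log\det(g_{\alpha\overline\beta})$ computed in the holomorphic frame $dz_1, dz_2$ and with the Lee form obtained by solving $d\omega=\theta\wedge\omega$. One slip to fix before the bookkeeping: $h_1,\dots,h_4$ are simply the components of $\theta$ itself, $\theta=h_1e^1+h_2e^2+h_3e^3+h_4e^4$, so that $J\theta=-h_2e^1+h_1e^2-h_4e^3+h_3e^4$; they are not, as you write, the $e^1,e^2$-coefficients of $J\theta$ (those are $-h_2,\,h_1$), nor do the relations for $h_1,h_2$ re-express the $e^3,e^4$-coefficients of $J\theta$ --- rather they arise from the last two equations of the linear system $d\omega=\theta\wedge\omega$. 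With that relabeling, your computation of $(d(J\theta))^{1,1}$ and the matching of coefficients against \eqref{rhobunotzero} proceed exactly as in the paper.
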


\begin{proof}  To prove $(a)$ we use that  $\partial \overline \partial \omega =0$ if and only if $\partial (s_x - i s_y) =0$. Since
 $$
 \partial (s_x - i s_y) = - \frac 12 i (s_{xx} + s_{yy}) \varphi^1
 $$
 we get that the pluriclosed condition is equivalent to $s$ to be constant.

To prove $(b)$ we  will use  \eqref{relRicciforms}.
If  we  write the complex function $u$ as  $u = u_1 + i u_2$ for a $T^2$-invariant  pluriclosed metric we get
$$
d \omega =  (-s - (u_1)_x - (u_2)_y)  \, e^{123} + ((u_1)_y - (u_2)_x) \, e^{124} $$
with $s$ constant.
Let
$$
\theta = h_1 e^1 + h_2 e^2 + h_3 e^3 + h_4 e^4.$$
with $h_i(x,y)$ real functions.  By a direct computation we have
$$
\begin{array}{lcl}
\theta \wedge \omega &= & (s h_1  + u_1 h_4 - h_3 u_2) e^{134} + (u_1 h_1 - u_2 h_2 + r h_4) e^{124}\\[3pt]
&&  (- u_2  h_1  -   u_1 h_2   + r h_3 ) e^{123} + (s h_2-  u_1 h_3 - u_2  h_4) e^{234}.
\end{array}
$$
By imposing
$$
d \omega = \theta \wedge \omega
$$
we obtain the system
$$
\left \{ \begin{array}{l}  - u_2 h_1 - u_1  h_2 + r h_3  = - s - (u_1)_x - (u_2)_y,\\[2pt]
u_1 h_1 - u_2 h_2 + r h_4 = (u_1)_y - (u_2)_x,\\[2pt]
s h_1 - u_2 h_3 + u_1 h_4 =0,\\[2pt]
s h_2 - u_1 h_3 - u_2 h_4 =0
\end{array} \right.
$$
in the variables $h_i$.
Therefore
$$
\begin{array}{lcl}
h_1 &=& \frac{1} {(rs- |u|^2)}  [ u_2  (- s - (u_1)_x - (u_2)_y) - u_1 ((u_1)_y - (u_2)_x) ] = \frac{1}{s} (u_2 h_3 - u_1 h_4),\\[4pt]
h_2 &=& \frac{1} {(rs- |u|^2)}   [ u_1 (- s - (u_1)_x - (u_2)_y) +u_2 ((u_1)_y - (u_2)_x) ] = \frac{1}{s} (u_1 h_3 + u_2 h_4) ,\\[4pt]
h_3 &=& \frac{s} {(rs- |u|^2)}    (- s - (u_1)_x - (u_2)_y),\\[4pt]
h_4 &= & \frac{s} {(rs- |u|^2)}  ((u_1)_y - (u_2)_x)\\[2pt]
\end{array}
$$
and as a consequence
$$
\begin{array}{lcl}
d  (J\theta) &=&  d h_1 \wedge e^2 -  d h_2  \wedge e^1 + d h_3 \wedge  e^4 + h_3     e^1 \wedge e^2 -dh_4  \wedge e^3\\[2pt]
&=& [(h_1)_y  + (h_2)_x  + h_3] \, e^{12}  + (h_3)_x e^{24} + (h_3)_y e^{14}  -  (h_4)_x e^{23} + (h_4)_y e^{13}.
\end{array}
$$
Since
$$
\begin{array}{l}
e^{12} = \frac{i}{2} \varphi^{1 \overline 1},\\[2pt]
e^{13} = \frac{1}{4} (\varphi^{12} + \varphi^{1 \overline 2} - \varphi^{2\overline 1} + \varphi^{\overline 1 \overline 2}),\\[2pt]
e^{14} = - \frac{i}{4} (\varphi^{12} - \varphi^{1 \overline 2} - \varphi^{2\overline 1} - \varphi^{\overline 1 \overline 2}),\\[2pt]
e^{23} = - \frac{i}{4} (\varphi^{12} + \varphi^{1 \overline 2} + \varphi^{2 \overline 1} + \varphi^{\overline 1 \overline 2}),\\[2pt]
e^{24} =  \frac{1}{4} (\varphi^{12} - \varphi^{1 \overline 2} + \varphi^{2\overline 1} + \varphi^{\overline 1 \overline 2}),
\end{array}
$$
we get
$$
\begin{array}{lcl}
(d  (J\theta))^{1,1} &= &\frac{i}{2}  [(h_1)_y   + (h_2)_x  + h_3] \, \varphi^{1 \overline 1} + (- \frac{1}{4} (h_3)_x + \frac 14 (h_4)_y + \frac i 4 (h_3)_y + \frac i 4 (h_4)_x)  \,  \varphi^{1 \overline 2}\\[2pt]
&& -  (- \frac{1}{4} (h_3)_x + \frac 14 (h_4)_y - \frac i 4 (h_3)_y - \frac i 4 (h_4)_x)  \,   \varphi^{2 \overline 1}.
\end{array}
$$


To find the Chern-Ricci form $\rho^C$ we use that it is the curvature of the canonical bundle which gives  the formula
$$\rho^C = -\sqrt{-1}\partial\overline{\partial} \log \det(g_{\alpha\overline{\beta}}) = -\frac{1}{2}dd^c \log \det(g_{\alpha\overline{\beta}}),$$
where $(g_{\alpha\overline{\beta}})$ is the matrix of $g$ is any local holomorphic $(1,0)$-frame. For such frame we can use $\chi^1= \varphi^1 = d(y+ix)$ and $\chi^2 = d(w+iz-\frac{1}{2}x^2)$ since $y+ix$ and $w+iz -\frac{1}{2}x^2$ are holomorphic coordinates on the universal cover of $M$, and the fundamental group is acting completely discontinuous. Since the $(1,0)$-form $\varphi^2 = dw+i(dz-xdy)$ is global, we can see that $$\chi^2 = \varphi^2+ix\varphi^1.$$
If $\tilde{H}$ is the Hermitian matrix of the metric $g$ in the basis $\chi^1,\chi^2$, and $H$ as above is the Hermitian matrix of $g$ in the basis $\varphi^1,  \varphi^2$, by the change of basis formula we get $$\det(\tilde{H}) = \det(H) = rs-|u|^2.$$
In particular, we have
$$\rho^C = -\frac{1}{2} dd^c log(rs-|u|^2).$$
From here and the fact that $r, s, u$ depend only on $x,y$ we get the formula
$$\rho^C = -\frac{1}{2}(\partial_x^2 + \partial_y^2) \log(rs-|u|^2) \, e^{12}.$$
Note that when $u=0$ one gets the formula from   \cite[Lemma 3]{G-G-P}.  From here we get (2).

\end{proof}

\smallskip

Then the pluriclosed flow can be written as the system of PDE's
$$
\left \{ \begin{array}{lcl}
\frac{\partial r} {\partial t} &=&  \frac{1}{2} ( \partial^2_x  +  \partial^2_y)  \left(  \log(rs-|u|^2)  \right)
    +  \frac 12 \partial_y \left(  \frac{r_y}{r} \right)  + (h_1)_y   +(h_2)_x  + h_3,\\[4pt]
\frac{\partial u_1} {\partial  t} &=&    - \frac{1}{2}  (h_3)_x  + \frac 12 (h_4)_y, \\[4pt]
\frac{\partial u_2} {\partial  t} &=&  \frac 12  (h_3)_y   + \frac 12   (h_4)_x, \\[4pt]
\frac{\partial s} {\partial  t}  &= &0,\\[4pt]
 \end{array}
 \right.
$$
where
$$
\begin{array}{lcl}
h_1 &=&  \frac{1}{s} (u_2 h_3 - u_1 h_4),\\[4pt]
h_2 &=&  \frac{1}{s} (u_1 h_3 + u_2 h_4) ,\\[4pt]
h_3 &=& \frac{s} {(rs- |u|^2)}    (- s - (u_1)_x - (u_2)_y),\\[4pt]
h_4 &= & \frac{s} {(rs- |u|^2)}  ((u_1)_y - (u_2)_x).\\[2pt]
\end{array}
$$
By a direct computation
$$
\begin{array}{lcl}
 (h_1)_y  +(h_2)_x + h_3& = &   - \frac{1}{s^2} (h_3^2 + h_4^2) (rs - |u|^2)  + \frac{1}{s} u_1 \left (- (h_3)_x   + (h_4)_y \right )\\[4pt]
 &&   + \frac{1}{s} u_2  \left ( (h_3)_y  +  (h_4)_x \right )\\[4pt]
 &=&  - \frac{1}{s^2} (h_3^2 + h_4^2) (rs - |u|^2) + \frac {1}{s}  \partial_t (|u|^2)
 \end{array}
 $$
 and so the system reduces to
\begin{equation} \label{NewSystem}
\left \{ \begin{array}{lcl}
\frac{\partial r} {\partial t} &=&  \frac{1}{2}   (\partial^2_x  + \partial^2_y ) \left(  \log(rs-|u|^2)  \right)
  + \frac{1}{s^2} (h_3^2 + h_4^2) (rs - |u|^2)  + \frac 1 s  \partial_t (|u|^2))),\\[4pt]
\frac{\partial u_1} {\partial  t} &=& - \frac{1}{2}  (h_3)_x + \frac 12 (h_4)_y, \\[4pt]
\frac{\partial u_2} {\partial  t} &=&  \frac 12  (h_3)_y  + \frac 12   (h_4)_x, \\[4pt]
\frac{\partial s} {\partial  t}  &= &0,\\[4pt]
h_3 &=& \frac{s} {(rs- |u|^2)}    (- s - (u_1)_x - (u_2)_y),\\[4pt]
h_4 &= & \frac{s} {(rs- |u|^2)}  ((u_1)_y - (u_2)_x).\\[2pt]
 \end{array}
 \right.
\end{equation}

The  previous system is  parabolic and quasilinear and applying the result for instance in \cite{MM} it has short time existence.

\begin{remark}  \begin{enumerate} \item Note that we  get that $\frac{\partial u_1} {\partial  t} =  0 = \frac{\partial u_2} {\partial  t}$ if and only if $h_3 + i h_4$ is a holomorphic function on $T^2$ and so a constant function.

 \item If $r, s$ and $u$ are constant one gets
 $$
\left \{ \begin{array}{lcl}
\frac{\partial r} {\partial t} &=&  \frac{s^2}{ (rs - |u|^2)},\\[4pt]
\frac{\partial u_1} {\partial  t} &=& 0, \\[4pt]
\frac{\partial u_2} {\partial  t} &=& 0. \\[4pt]
 \end{array}
 \right.
$$

\item The pluriclosed flow on compact complex surfaces which are  the total space of a holomorphic $T^2$-principal bundle over a Riemann surface $\Sigma$ have been studied in   \cite{Streets},
showing  that the solution to
pluriclosed flow with initial data   a $T^2$-invariant metric $\omega_0$  with $u=0$
 exists on $[0,  + \infty)$, and  that  $(M,  \omega(t))$  converges in the Gromov-Hausdorff topology to a point.

\end{enumerate}
\end{remark}

\smallskip

\begin{te}  Let $\omega = \frac{1}{2}  i  r  (x,y)\,  \varphi^{1 \overline 1} + \frac{1}{2}  i s \,  \varphi^{2 \overline 2}  + \frac{1}{2} (u (x,y)\varphi^{1 \overline 2} -  \overline u (x,y)  \varphi^{2 \overline1})$  be  the fundamental form of a   $T^2$-invariant pluriclosed   metric  $g$ on the Kodaira-Thurston surface $M$. Then  $\omega$ is Vaisman  if and only if  the functions
$$
\begin{array}{lcl}
h_3 &=& \frac{s} {(rs- |u|^2)}    (- s - (u_1)_x - (u_2)_y),\\[4pt]
h_4 &= & \frac{s} {(rs- |u|^2)}  ((u_1)_y - (u_2)_x)\\[2pt]
\end{array}
$$
are  both constant. Moreover, if $\omega$ is Vaisman,  the Lee vector field  $T$ is given by  $T = \frac{h_3}{s} e_3 + \frac{h_4}{s} e_4$ and
$$
d (J \theta) = - \frac{1}{s^2} (h_3^2 + h_4^2) (rs - |u|^2) e^{12}.
$$
\end{te}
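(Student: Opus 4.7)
The plan is to lean on the Madani--Moroianu--Ornea theorem already recalled in the preliminaries: on a compact complex manifold, a pluriclosed locally conformally K\"ahler metric with holomorphic Lee vector field is automatically Vaisman. Since pluriclosedness is part of the hypothesis, the ``if'' direction reduces to checking that (i) $d\theta = 0$ and (ii) $\theta^{\sharp}$ is holomorphic, both of which should follow easily from $h_{3}, h_{4}$ being constant; the ``only if'' direction should fall out of a direct computation of $d\theta$ and reading off the necessary condition.

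First I would compute $d\theta$ in the invariant coframe, using $de^{i} = 0$ for $i = 1, 2, 3$ and $de^{4} = e^{12}$. Since $h_{1}, h_{2}, h_{3}, h_{4}$ are functions of $(x, y)$ alone, the terms $dh_{3} \wedge e^{3}$ and $dh_{4} \wedge e^{4}$ contribute exactly $(h_{3})_{y} e^{13} + (h_{3})_{x} e^{23} + (h_{4})_{y} e^{14} + (h_{4})_{x} e^{24}$, while the $e^{12}$-component receives contributions from $dh_{1} \wedge e^{1}$, $dh_{2} \wedge e^{2}$ and $h_{4}\,de^{4}$. Closedness of $\theta$ therefore forces $h_{3}$ and $h_{4}$ to be constant, which settles the ``only if'' direction. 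Conversely, once $h_{3}, h_{4}$ are constant I would verify that the $e^{12}$-component also cancels automatically using the defining identities
$$
-s - (u_{1})_{x} - (u_{2})_{y} = \frac{rs - |u|^{2}}{s}\, h_{3}, \qquad (u_{1})_{y} - (u_{2})_{x} = \frac{rs - |u|^{2}}{s}\, h_{4}.
$$

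Next I would identify the Lee vector field. Using the Gram matrix of $g$ read off from the expression of $\omega$, a direct check shows that $T := \frac{h_{3}}{s} e_{3} + \frac{h_{4}}{s} e_{4}$ satisfies $g(T, e_{i}) = h_{i}$ for $i = 1, 2, 3, 4$ --- the formulas for $h_{1}, h_{2}$ in the previous proposition are engineered precisely for this matching. When $h_{3}, h_{4}$ are constant, $T$ is a constant linear combination of the central left-invariant vector fields $e_{3} = \partial_{w}$ and $e_{4} = \partial_{z}$, whose flows preserve the left-invariant complex structure $J$ (they commute with every left-invariant vector field and $J$ sends the left-invariant basis to itself); hence $T$ is holomorphic and Killing, and \cite{MMO} yields the Vaisman condition.

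For the final formula, with $h_{3}, h_{4}$ constant one has $J\theta = h_{1} e^{2} - h_{2} e^{1} + h_{3} e^{4} - h_{4} e^{3}$, so only the $e^{12}$-component of $d(J\theta)$ survives, reducing to $[(h_{1})_{y} + (h_{2})_{x} + h_{3}]\, e^{12}$; substituting the formulas for $h_{1}, h_{2}$ and reapplying the identities above produces the claimed expression. I expect the main technical point to be the algebraic cancellation forcing the vanishing of the $e^{12}$-component of $d\theta$ once $h_{3}, h_{4}$ are constant: without it, $d\theta = 0$ would impose an extra PDE on $(r, u)$ beyond mere constancy of $h_{3}, h_{4}$, and the clean characterization in the statement would fail.
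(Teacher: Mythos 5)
Your proposal is correct and follows essentially the same route as the paper: compute $d\theta$ in the invariant coframe to see that the LCK condition amounts to $h_3,h_4$ constant (with the $e^{12}$-component vanishing automatically via the defining identities for $h_1,h_2$), identify $T=\frac{h_3}{s}e_3+\frac{h_4}{s}e_4$, observe it is holomorphic and invoke the result of \cite{MMO}, and then obtain $d(J\theta)$ from $(h_1)_y+(h_2)_x+h_3$. Your direct check $g(T,e_i)=h_i$ and the central-vector-field argument for holomorphy are only cosmetic variants of the paper's inversion of the metric and its verification of $[T,JX]=J[T,X]$.
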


\begin{proof}  The $T^2$-invariant pluriclosed metric $g$ is locally conformally K\"ahler if and only if $d \theta =0$. The vanishing of
$$
d \theta = d(h_1) \wedge e^1 + d(h_2) \wedge e^2 + d(h_3) \wedge e^3 + d(h_4) \wedge e^4 + h_4 e^{12}
$$
 is equivalent to the conditions
$$
d h_3 = dh_4 =0,  \quad  - (h_1)_x +  (h_2)_y + h_4 =0.
$$
By using that $h_3$ and $h_4$ are both constant and the expressions
$$
\begin{array}{lcl}
h_1 &=& \frac{1} {s}   (u_2  h_3  - u_1 h_4),\\[4pt]
h_2 &=& \frac{1} {s}   (u_1  h_3  + u_2 h_4),\\[4pt]
\end{array}
$$
we have that  the condition  $  - (h_1)_x +  (h_2)_y + h_4 =0$ is always satisfied.
Now  we  only need to prove that  if $h_3$ and $h_4$ are both constant then  the Lee vector field $T$ is holomorphic,  since automatically $g$ will be Vaisman.
$T$ is the metric dual of $\theta$, so we must have  $g (T, X) = \theta (X)$, for every vector field $X$. By imposing   $g (T, e_i) = \theta (e_i) = h_i$, for every $i = 1, \ldots, 4$,  we have that the Lee vector field $T$  is given by
$$
\begin{array}{lcl}
T &= &\frac{(h_1 s - h_3 u_2  + h_4 u_1)}{rs - |u|^2} e_1 +  \frac{(h_2 s - h_3 u_1- h_4 u_2)}{rs - |u|^2} e_2 \\[5pt]
&&+ \frac{(-h_1 u_2 - h_2 u_1 +  h_3 r)}{rs - |u|^2} e_3  + \frac{(h_1 u_1 -  h_2 u_2 + h_4 r)}{rs - |u|^2}e_4.
\end{array}
$$
Using that
$$
\begin{array}{lcl}
h_1 &=& \frac{1} {s}   (u_2  h_3  - u_1 h_4),\\[4pt]
h_2 &=& \frac{1} {s}   (u_1  h_3  + u_2 h_4),
\end{array}
$$
it follows that  $T = \frac{h_3}{s} e_3 + \frac{h_4}{s} e_4$. Therefore $T$ is holomorphic since $[T, JX] = J[T, X]$, for every vector field $X$.
The last part  of the theorem follows from
$$
d  (J\theta)  =  [(h_1)_y  + (h_2)_x  + h_3] \, e^{12}
$$
and
$$
(h_1)_y   +(h_2)_x + h_3 = - \frac{1}{s^2} (h_3^2 + h_4^2) (rs - |u|^2).
$$

\end{proof}

\smallskip

As a consequence of the previous theorem we can prove

\begin{te}Let  $\omega_0 $ be  the fundamental form of a    $T^2$-invariant  Vaisman metric on the Kodaira-Thurston surface $M$, then the pluriclosed flow starting with $\omega_0$  preserves the Vaisman condition.
\end{te}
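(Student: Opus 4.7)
The plan is to exploit the characterization established in the previous theorem --- a $T^2$-invariant pluriclosed metric on the Kodaira-Thurston surface $M$ is Vaisman if and only if $h_3$ and $h_4$ are constants --- and to show this property is propagated by the pluriclosed flow.

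Starting from Vaisman initial datum, $h_3(0), h_4(0)$ are constants, so their spatial derivatives vanish at $t=0$. Substituting into the evolution equations
\[
\partial_t u_1 = -\tfrac{1}{2} (h_3)_x + \tfrac{1}{2} (h_4)_y, \qquad \partial_t u_2 = \tfrac{1}{2} (h_3)_y + \tfrac{1}{2} (h_4)_x,
\]
gives $\partial_t u_1|_{t=0} = \partial_t u_2|_{t=0} = 0$, and $\partial_t s \equiv 0$ is already built into the flow. It is therefore natural to seek a solution in which $u$ and $s$ are time-independent and only $r$ evolves. Under this ansatz, the quantities $A := -s - (u_1)_x - (u_2)_y$ and $B := (u_1)_y - (u_2)_x$ are time-independent, and preservation of the Vaisman condition at time $t$ amounts to requiring that $f(x, y, t) := r(x, y, t) s - |u|^2$ be proportional to $f_0(x, y)$ through a time-dependent scalar factor $\lambda(t)$ with $\lambda(0) = 1$. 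This immediately yields $h_3(t) = c_3/\lambda(t)$ and $h_4(t) = c_4/\lambda(t)$, both constants in $(x, y)$, as desired.

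Next I would substitute $f(t) = \lambda(t) f_0$ into the reduced flow equation for $r$ --- which, upon imposing $\partial_t u = 0$, takes the form $\partial_t r = \tfrac{1}{2}\Delta \log f + \tfrac{1}{s^2}(h_3^2 + h_4^2) f$ --- and simplify using the Vaisman identities from Remark 2.1 and the preceding theorem, in particular $\rho^B = (h-1) d(J\theta)$ and $d(J\theta) = -\tfrac{1}{s^2}(h_3^2 + h_4^2) f \cdot e^{12}$. This should reduce the PDE to a scalar ODE for $\lambda(t)$ on a short time interval. The solution so constructed is a Vaisman-preserving flow starting at $\omega_0$; by short-time existence and uniqueness for the quasilinear parabolic pluriclosed flow (cf.\ \cite{MM}), it must coincide with the actual flow solution, establishing that the Vaisman condition is preserved.

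The main obstacle is showing that the reduced PDE does collapse consistently to a scalar ODE for $\lambda$: after substituting the ansatz, there remain $(x,y)$-dependent terms (essentially $\Delta\log f_0$ and $f_0$) which need to combine into a spatial identity. When the initial Vaisman metric has constant scalar curvature, $\Delta \log f_0 / f_0$ is itself a constant and this cancellation is automatic, recovering the setting of \cite{FT}; in the non-constant scalar curvature regime, one must exploit the explicit structure of Vaisman metrics on $M$ (the vanishing of the first Bott-Chern class and the formula for $d(J\theta)$ from the preceding theorem) to obtain the required identity.
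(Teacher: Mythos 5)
Your strategy is the same as the paper's: freeze $s$ and $u$, let only $r$ evolve, reduce the flow to the scalar equation coming from \eqref{NewSystem}, and conclude by uniqueness of the pluriclosed flow. The genuine gap is precisely the step you flag and then leave open: you never prove that, after substituting $f(t)=\lambda(t)f_0$ (with $f=rs-|u|^2$, $f_0=f(\cdot,0)$), the reduced equation collapses to an ODE for $\lambda$. This is not something that can be rescued by ``exploiting the explicit structure of Vaisman metrics on $M$''. Writing $\Delta=\partial_x^2+\partial_y^2$ and using $h_3=c_3 f_0/f$, $h_4=c_4 f_0/f$ (the constants $c_3,c_4$ being the initial values) together with $\partial_t f=s\,\partial_t r$ under your ansatz, the reduced equation $\partial_t r=\tfrac12\Delta\log f+\tfrac{1}{s^2}(h_3^2+h_4^2)f$ becomes
\[
\lambda'(t)\,f_0=\frac{s}{2}\,\Delta\log f_0+\frac{c_3^2+c_4^2}{s\,\lambda(t)}\,f_0,
\]
and since the first term on the right is $t$-independent, this is solvable by a function $\lambda(t)$ alone if and only if $\Delta\log f_0$ is a constant multiple of $f_0$. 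By the relation $\rho^{Ch}=h\,d(J\theta)$ recalled in Section 2, together with $\rho^{Ch}=-\tfrac12\Delta\log f_0\,e^{12}$ and $d(J\theta)=-\tfrac{1}{s^2}(c_3^2+c_4^2)f_0\,e^{12}$ from the characterization theorem, that proportionality is exactly the constant scalar curvature condition. So your scheme, as written, only reproves the case already covered by \cite{FT}; in the non-constant scalar curvature regime the proportional ansatz is incompatible with the reduced PDE, and no identity of the kind you hope for is available.

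The same difficulty appears even before imposing any ansatz: along the actual flow one has $\partial_t u_1|_{t=0}=\partial_t u_2|_{t=0}=0$ because $h_3,h_4$ are constant at $t=0$, hence
\[
\partial_t h_3\big|_{t=0}=-\frac{c_3}{f_0}\,\partial_t f\big|_{t=0}
=-\frac{c_3 s}{2}\,\frac{\Delta\log f_0}{f_0}-\frac{c_3(c_3^2+c_4^2)}{s},
\]
which is spatially constant precisely when the scalar curvature of $\omega_0$ is constant. Thus the one assertion that must be verified for the theorem --- that $h_3(t),h_4(t)$ remain spatially constant for $t>0$ --- is exactly what your argument does not supply; note that the paper's own proof is silent on the same point, since it solves the scalar equation for $\tilde r$ without checking that the functions $h_3(t),h_4(t)$ recomputed from $\tilde r(t)$ stay constant (which is needed both for the ansatz to solve the full system \eqref{NewSystem} and for the Vaisman conclusion). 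To go beyond the constant scalar curvature case you would need either to produce this verification or to find a genuinely different mechanism; the proposal as it stands does not close the gap.
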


\begin{proof}

Let $\omega_0 = \frac{1}{2}  i  r (x,y)\,  \varphi^{1 \overline 1} + \frac{1}{2}  i s \,  \varphi^{2 \overline 2}  + \frac{1}{2} (u (x,y)\varphi^{1 \overline 2} -  \overline u (x,y)  \varphi^{2 \overline1})$ be the fundamental form of the    $T^2$-invariant  Vaisman metric.
Then
$$
\begin{array}{lcl}
h_3 &=& \frac{s} {(rs- |u|^2)}    (- s - (u_1)_x - (u_2)_y),\\[4pt]
h_4 &= & \frac{s} {(rs- |u|^2)}  ((u_1)_y - (u_2)_x)\\[2pt]
\end{array}
$$
are both constants.
To  prove that the Vaisman condition is preserved under the  pluriclosed flow,  we   use  the following ansatz
 $$
 \omega(t) =   \frac{1}{2}  i   \tilde r (x,y,t)\,  \varphi^{1 \overline 1} + \frac{1}{2}  i s \,  \varphi^{2 \overline 2}  + \frac{1}{2} (u (x,y)\varphi^{1 \overline 2} -  \overline u (x,y)  \varphi^{2 \overline1}),
 $$
with $\tilde r (x,y, 0) = r(x,y)$ and $s$ constant and use that  the equation
$$
\partial_t  \tilde r =  \frac{1}{2}  \partial_x \left(  \frac{\tilde r_x}{\tilde r}  \right )  +  \frac 12 \partial_y \left(  \frac{ \tilde r_y}{\tilde r} \right) + \frac{1}{s^2} (h_3^2 + h_4^2) (\tilde rs - |u|^2),
$$
is   quasi-linear parabolic  and so it  admits a solution.
Indeed, the equations in \eqref{NewSystem} reduce only  to the previous equation, since $\frac{\partial u_1} {\partial  t} = \frac{\partial u_2} {\partial  t} =0$.
\end{proof}

\smallskip

Note that, using deformartions of type II, we can show the existence of Vaisman $T^2$-invariant metrics  with non-constant scalar curvature  on the Kodaira-Thurston surface.  Start with the Vaisman metric  with fundamental form  $\omega = e^{12} + e^{34}$, we have that $\theta = - e^3$. If we apply the deformation of type II  in Section 2 using a   function $f = f(x,y)$ such that $1 - f_{xx} - f_{yy} > 0$, we obtain  that the  metric with fundamental form
$$
\begin{array}{lcl}
\tilde \omega &= &  e^{12} + e^{34}  - e^3  \wedge J df - df \wedge  e^4 + df \wedge J df - d d^c f\\[3pt]
& = &(1 + (f_x)^2 + (f_y)^2 - f_{xx} - f_{yy}) e^{12} + e^{34} - f_x (e^{13} + e^{24}) - f_y (e^{14} - e^{23})
\end{array}
$$
is Vaisman with $\tilde \theta = - e^3 + df = - e^3 + f_y e^1 + f_x e^2$.  So
$$
d (J  \tilde \theta) = d (- e^4 + f_y e^2 - f_x e^1) =  (-1 + f_{xx} +  f_{yy}) e^{12}.
$$
Moreover, $$
\begin{array}{lcl}
\tilde {\rho}^{Ch}  &= &     -\frac{1}{2}(\partial_x^2 + \partial_y^2) \log(rs-|u|^2)  \,  e^{12}\\[5pt]
&  = &-  \frac{1}{2}  \frac{ 1} { (-1 + f_{xx} +  f_{yy})}  \left(  ( \partial_x^2 + \partial_y^2) \log(rs-|u|^2)  \right)     d (J  \tilde \theta),
\end{array}
$$
where $r = 1 + (f_x)^2 + (f_y)^2 - f_{xx} - f_{yy}, s = 1$ and $u = - f_x - i f_y$. Therefore
$$
\tilde {\rho}^{Ch}  =  -  \frac{1}{2}  \frac{ 1} { (-1 + f_{xx} +  f_{yy})}  \left(  (\partial_x^2 + \partial_y^2) \log( 1 - f_{xx} - f_{yy}) \right)    d (J  \tilde \theta).
$$

Non-constant functions $f$ such that $f_{xx} +  f_{yy}  <1$  exist. Therefore  if  $$\frac{ 1} { (-1 + f_{xx} +  f_{yy})} \left(  (\partial_x^2 + \partial_y^2) \log( 1 - f_{xx} - f_{yy}) \right) $$  is non-constant,  the scalar curvature of $\tilde \omega$ is non-constant.

\smallskip

{\bf Acknowledgements.}
Anna Fino is partially supported by Project PRIN 2017 “Real and complex manifolds: Topology, Geometry and Holomorphic Dynamics”, by GNSAGA (Indam) and by a grant from the Simons Foundation (\#944448). Gueo Grantcharov is partially supported by a grant from the Simons Foundation (\#853269). We would like to thank  Liviu Ornea,  Jeff Streets and Luigi Vezzoni for  useful comments.

\end{document}